\documentclass[12pt, reqno]{amsart}
\allowdisplaybreaks[1]
\usepackage{amsmath}
\usepackage{amssymb}
\usepackage{amsfonts}
\usepackage{verbatim}
\usepackage{amsthm}
\usepackage[usenames]{color}
\usepackage{hyperref}
\makeindex

 \newtheorem{theorem}{Theorem}[section]
 \newtheorem{corollary}[theorem]{Corollary}
  
 \newtheorem{lemma}[theorem]{Lemma}

\theoremstyle{definition}

\theoremstyle{remark}

\newtheorem{fact*}{Fact}


\newcommand{\til}{\raise.17ex\hbox{$\scriptstyle\mathtt{\sim}$}}

\newcommand{\HH}{\mathcal{H}}
\newcommand{\MH}{\textrm{Mult }\mathcal{H}}

\newcommand\beq{\begin{equation}}

\newcommand\eeq{\end{equation}}

\newcommand{\bbm}{\left[ \begin{matrix}}
\newcommand{\ebm}{\end{matrix} \right]}
\newcommand{\bpm}{\left( \begin{matrix}}
\newcommand{\epm}{\end{matrix} \right)}
\numberwithin{equation}{section}

\newlength{\Mheight}
\newlength{\cwidth}

\newcommand{\dfn}[1]{{\bf #1}\index{#1}}

\title[The Random Column-Row Property]{Committee spaces and the random column-row property}
\author{
J. E. Pascoe 
}
\date{\today}

\setcounter{tocdepth}{4}

\subjclass[2010]{46E22,	47B32}

\begin{document}

\begin{abstract}
A committee space is a Hilbert space of power series, perhaps in several or noncommuting variables, such that $\|z^\alpha\|\|z^\beta\| \geq \|z^{\alpha+\beta}\|.$
Such a space satisfies the true column-row property when ever the map transposing a column multiplier to a row multiplier is contractive. We describe a model for random multipliers and show that
such random multipliers satisfy the true column-row property.
We also show that the column-row property holds asymptotically for large random Nevanlinna-Pick interpolation problems where the nodes are chosen identically and independently.
These results suggest that finding a violation of the true column-row property for the Drury-Arveson space via na\"ive random search is unlikely.
\end{abstract}
\maketitle


\section{Introduction}
Let $\HH$ be a Hilbert space of formal power series such that polynomials are dense. (Either commutative or not, and perhaps in several or infinitely many variables.)
We say $\HH$ is a \dfn{committee space} whenever 
\begin{enumerate}
	\item $\langle z^\alpha, z^\beta \rangle = 0$ if $\alpha \neq \beta,$
	\item \dfn{Committee inequality:} $\|z^\alpha\|\|z^\beta\| \geq \|z^{\alpha+\beta}\|.$
\end{enumerate}
One can verify that the following spaces are committee spaces:
\begin{itemize}
		\item Hardy space. The space of power series in one variable such that each monomial has norm one.
		\item The Drury-Arveson space. The Hilbert space of commuting power series in $d$ variables such that $$\langle z^\alpha, z^\alpha\rangle = \frac{1}{{{|\alpha|}\choose{\alpha}}},$$
		where ${|\alpha|}\choose{\alpha}$ is the multi-nomial coefficient. That is, if $\alpha =(\alpha_1,\ldots \alpha_d),$
			$${{|\alpha|}\choose{\alpha}} = \frac{(\alpha_1+\ldots +\alpha_d)!}{\alpha_1!\ldots\alpha_d!},$$
		which is number of ways to divide $|\alpha|$ people into committees of size $\alpha_i.$
		The committee inequality follows from the fact that 
			$${{|\alpha+\beta|}\choose{\alpha+\beta}} \geq {{|\alpha|}\choose{\alpha}}{{|\beta|}\choose{\beta}}$$
		which in turn corresponds to the fact there are less ways to divide people into committees when constraints are placed on the composition of those committees.
		\item Fock space. The space of noncommutative power series such that each monomial has norm one.
		\item Dirichlet space. The space of power series in one variable such that the $\langle z^n, z^n\rangle = n+1.$
\end{itemize}

We denote the monomial shifts on $\HH$ by $M_{z^\gamma}.$
Note that $M_{z^\gamma}M_{z^\gamma}^*$ and $M_{z^\gamma}^*M_{z^\gamma}$ are diagonal
and with respect to the basis $z^\alpha,$ and therefore $\|M_{z^\gamma}\| = \sup_\alpha \frac{\|z^{\gamma+\alpha}\|}{\|z^\alpha\|}$
is bounded by the committee inequality. The multipliers $M_f$ of $\HH$ are denoted as $\MH.$

We say $\HH$ satisfies the \dfn{true column-row property} if $\|\sum_i M_{f_i}M_{f_i}^*\|\leq \|\sum_i M_{f_i}^*M_{f_i}\|$ for any sequence of multipliers.
We say $\HH$ satisfies the \dfn{column-row property} if there is a constant $C>0$ such that $\|\sum_i M_{f_i}M_{f_i}^*\|\leq C\|\sum_i M_{f_i}^*M_{f_i}\|$ for any sequence of multipliers.
The column-row property  is important in interpolation theory \cite{Trent2004, AHMR, AHMR2, JuryMartin}. So far as the author knows, there are no known commutative complete Nevanlinna-Pick spaces
for which the true column-row property fails, and, in general, complete Nevanlinna-Pick spaces are committee spaces. The column row property fails for the Fock space in two or more variables
\cite{AJKP}.

The goal of this manuscript is to understand when a random sequence of multipliers $M_{f_1}, M_{f_2}, \ldots$ 
satifies the column-row property. That is, when is
	$$\|\lim_{n\rightarrow \infty} \frac{1}{n}\sum^n_{i=1} M_{f_i}M_{f_i}^*\|\leq \|\lim_{n\rightarrow \infty} \frac{1}{n}\sum^n_{i=1} M_{f_i}^*M_{f_i}\|.$$
Here a normalization is taken to guarantee convergence.
We note a slight subtlety here, the limits must be evaluated in the weak operator topology as opposed to in norm.
If our method of sampling secretly sampled from a space with finite dimension, all limits would reduce to  \emph{bona fide}
norm limits. 

\section{A model for sampling random multipliers}
	Let $v_\gamma$ be a sequence of random variables indexed by multi-indices $\gamma$ such that
	\begin{enumerate}
		\item $E(v_\gamma) = 0,$
		\item $E(|v_\gamma|^2) = w_\gamma < \infty,$
		\item $E(\overline{v_\gamma} v_{\gamma'})=0$ if $\gamma \neq \gamma',$
		\item There is constant $C>0$ such that the function $f_v = \sum v_\gamma z^\gamma$ satisfies $\|M_{f_v}\|\leq C$ almost surely.
	\end{enumerate}
	We call such a sequence $v_\gamma$ a \dfn{random multiplier model.}
	We will compute
		$$R_v = E(M_{f_v}M_{f_v}^*), C_v = E(M_{f_v}^*M_{f_v}).$$
	Note that, in the weak operator topology, almost surely,
		$$R_v = \lim_{n\rightarrow \infty} \frac{1}{n}\sum^n_{i=1} M_{f_i}M_{f_i}^*,$$
		$$C_v = \lim_{n\rightarrow \infty} \frac{1}{n}\sum^n_{i=1} M_{f_i}^*M_{f_i}$$
	where $f_i$ is a sequence of functions sampled from the random multiplier model.
\section{The row column norm holds for random multipliers}
	We will now prove  the following theorem.
	\begin{theorem}\label{mainresult}
	Let $\HH$ be a committee space. Let $v_\gamma$ be a random multiplier model.
	Then, 
		$$\|R_v\|\leq \|C_v\|.$$
	\end{theorem}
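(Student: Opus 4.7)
The plan is to expand $M_{f_v}$ as $\sum_\gamma v_\gamma M_{z^\gamma}$ and reduce the inequality to a comparison of two positive operators, where the committee inequality and a carefully chosen test vector (the constant function) do all the work.

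First, I would compute the expectations by linearity. Using $E(\overline{v_\gamma} v_\delta) = w_\gamma \delta_{\gamma,\delta}$ (which follows from axioms (1)--(3) of the random multiplier model), together with the almost-sure bound $\|M_{f_v}\|\leq C$ to justify passing the expectation through the weak-operator sums, one obtains
$$R_v = \sum_\gamma w_\gamma M_{z^\gamma} M_{z^\gamma}^*,\qquad C_v = \sum_\gamma w_\gamma M_{z^\gamma}^* M_{z^\gamma}.$$
Both are thus positive operators, and both are diagonal in the orthogonal basis $\{z^\alpha\}$.

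Second, since $R_v$ is a weak-operator sum of positive operators, the triangle inequality yields
$$\|R_v\| \leq \sum_\gamma w_\gamma \|M_{z^\gamma}\|^2 \leq \sum_\gamma w_\gamma \|z^\gamma\|^2,$$
the latter step being exactly the committee inequality written as $\|M_{z^\gamma}\|^2 = \sup_\alpha \|z^{\alpha+\gamma}\|^2/\|z^\alpha\|^2 \leq \|z^\gamma\|^2$. Meanwhile, testing the Rayleigh quotient of $C_v$ on the constant function $z^0$, normalized so that $\|z^0\|=1$ (the committee inequality at $\alpha=\beta=0$ forces $\|z^0\|\geq 1$, and this is the standard normalization satisfied by all the listed examples), gives
$$\|C_v\| \geq \langle C_v z^0, z^0\rangle = \sum_\gamma w_\gamma \|M_{z^\gamma} z^0\|^2 = \sum_\gamma w_\gamma \|z^\gamma\|^2.$$
Concatenating the two displays produces $\|R_v\| \leq \|C_v\|$, as desired.

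Conceptually, the constant function $z^0$ plays a dual role: it realizes the equality case of the committee inequality (since trivially $\|z^0\|\cdot\|z^\gamma\|=\|z^\gamma\|$), which makes the triangle-inequality upper bound for $R_v$ coincide exactly with the Rayleigh-quotient lower bound for $C_v$. I expect the main obstacle to be the identity $\|M_{z^\gamma}\|\leq \|z^\gamma\|$; absent the committee hypothesis this can fail, and then the triangle bound on $R_v$ would easily exceed any test-vector bound on $C_v$ one can hope to prove, and the theorem would not follow from such soft arguments. A minor technical point is the interchange of expectation and weak-operator sums in the first step, which is controlled by the uniform bound $\|M_{f_v}\|\leq C$ applied to each matrix entry.
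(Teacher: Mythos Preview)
Your proof is correct and follows essentially the same route as the paper: both compute $R_v=\sum_\gamma w_\gamma M_{z^\gamma}M_{z^\gamma}^*$ and $C_v=\sum_\gamma w_\gamma M_{z^\gamma}^*M_{z^\gamma}$, use the committee inequality to bound $\|R_v\|$ above by $\sum_\gamma w_\gamma\|z^\gamma\|^2$, and test $C_v$ on the constant function $z^0$ to get the matching lower bound. The only difference is that the paper first computes the exact diagonal entries of $R_v$ (via a short lemma on $M_{z^\gamma}^*z^\alpha$) to obtain the finer intermediate estimate $\|R_v\|\leq\sup_\alpha\sum_{\gamma+\beta=\alpha}\|z^\gamma\|^2w_\gamma$, which it later reuses in the truncated-shift discussion, whereas you go straight to the triangle inequality $\|R_v\|\leq\sum_\gamma w_\gamma\|M_{z^\gamma}\|^2$; for the theorem itself your shortcut loses nothing.
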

	\begin{proof}
	\begin{lemma}
		Let $\alpha, \gamma$ be multi-indices.
		If there is a $\beta$ such that $\alpha = \gamma + \beta,$
		then,
		$$\|M_{z^\gamma}^*z^\alpha\| = \frac{\|z^\alpha\|^2}{\|z^\beta\|}.$$
		Otherwise,
			$$\|M_{z^\gamma}^*z^\alpha\| =0.$$
	\end{lemma}
	\begin{proof}
		Note that if $$\langle M_{z^\gamma}^*z^\alpha, z^\beta \rangle = \langle z^\alpha,  M_{z^\gamma}z^\beta \rangle = \langle z^\alpha,  z^{\gamma+\beta} \rangle$$
		is to be non-zero, then $\alpha = \gamma + \beta.$ Moreover, $\beta$ is unique.
		Now,
		\begin{align*}
			\|M_{z^\gamma}^*z^\alpha\| 	&= \langle M_{z^\gamma}^*z^\alpha, \frac{z^\beta}{\|z^\beta\|} \rangle \\
							&= \langle z^\alpha, \frac{z^\alpha}{\|z^\beta\|} \rangle \\
							&=  \frac{\|z^\alpha\|^2}{\|z^\beta\|} .
		\end{align*}
	\end{proof}
	\begin{lemma}
		$$\|R_v\| \leq \sup_{\alpha} \sum_{\gamma+\beta=\alpha} \|z^\gamma\|^2w_\gamma.$$
	\end{lemma}
	\begin{proof}
		Note,
		\begin{align*}		
			R_v 	&= E(M_{f_v}M_{f_v}^*)\\
				&= E(\sum_{\gamma,\gamma'} v_\gamma \overline{v_{\gamma'}}M_{z^{\gamma}}M_{z^{\gamma'}}^*) \\
				&= \sum_{\gamma} w_\gamma M_{z^{\gamma}}M_{z^{\gamma}}^*. 
		\end{align*}
		So, $R_v$ must be a diagonal operator with respect to the monomial basis.
		Therefore, we may compute the norm as follows,
			\begin{align*}		
			\|R_v\| &= \sup_\alpha \frac{\langle R_v z^\alpha, z^\alpha\rangle}{\|z^\alpha\|^2}\\
				&= \sup_\alpha \frac{\langle \sum_{\gamma} w_\gamma M_{z^{\gamma}}M_{z^{\gamma}}^*z^\alpha, z^\alpha\rangle}{\|z^\alpha\|^2} \\
				&= \sup_\alpha \sum_{\gamma+\beta=\alpha} \frac{ \|z^{\alpha}\|^4w_\gamma}{\|z^\beta\|^2\|z^\alpha\|^2} \\
				&= \sup_\alpha \sum_{\gamma+\beta=\alpha} \frac{ \|z^{\beta+\gamma}\|^2w_\gamma}{\|z^\beta\|^2} \\
				&\leq \sum_{\gamma+\beta = \alpha} \|z^{\gamma}\|^2w_\gamma,
		\end{align*}
		where the last line holds by the committee inequality.
	\end{proof}
	\begin{lemma}\label{collemma}
		$$\|C_v\| = \sum_{\gamma} \|z^\gamma\|^2w_\gamma.$$
	\end{lemma}
	\begin{proof}
		Note,
		\begin{align*}		
			C_v 	&= E(M_{f_v}^*M_{f_v})\\
				&= E(\sum_{\gamma,\gamma'} \overline{v_\gamma} v_{\gamma'}M_{z^{\gamma}}^*M_{z^{\gamma'}}) \\
				&= \sum_{\gamma} w_\gamma M_{z^{\gamma}}^*M_{z^{\gamma}}. 
		\end{align*}
		So, $C_v$ must be a diagonal operator with respect to the monomial basis.
		Therefore, we may compute the norm as follows,
		\begin{align*}		
			\|C_v\| &= \sup_\alpha \frac{\langle C_v z^\alpha, z^\alpha\rangle}{\|z^\alpha\|^2}\\
				&= \sup_\alpha \frac{\langle \sum_{\gamma} w_\gamma M_{z^{\gamma}}^*M_{z^{\gamma}}z^\alpha, z^\alpha\rangle}{\|z^\alpha\|^2} \\
				&= \sup_\alpha \sum_{\gamma} \frac{ \|z^{\gamma+\alpha}\|^2w_\gamma}{\|z^\alpha\|^2} \\
				&= \sum_{\gamma} \|z^{\gamma}\|^2w_\gamma,
		\end{align*}
		where the last equality follows from the committee inequality, as that implies the maximum is attained when $\alpha$ is the trivial multi-index.
	\end{proof}
	Now, we see that 
		$$\|R_v\|\leq \sup_{\alpha} \sum_{\gamma+\beta=\alpha} \|z^\gamma\|w_\gamma \leq
		\sum_{\gamma} \|z^\gamma\|w_\gamma \leq \|C_v\|,$$
	which proves Theorem \ref{mainresult}.
	\end{proof}

	Note that, formally, we could have assumed there was constant $C>0$ such that the function $f_v = \sum v_\gamma z^\gamma$ satisfies $\|f_v\|_{\HH}\leq C$ almost surely, and, algebraically, everything would have still worked.
	This is reminiscent of the theorem of Cochran-Shapiro-Ullrich \cite{CSU}, that, given a function in the Dirichlet space, randomly multiplying each coefficient by $\pm 1$ gives a multiplier of the Dirichlet space.
	\subsection{The truncated shift case}
	We note that the above calculations hold true, with mild but insightful cosmetic differences,
	for the restriction of the shifts to monomials of degree less than $n.$
	We see that, in fact,
		$$\|R^{(n)}_v\|\leq \max_{|\alpha|\leq n} \sum_{\gamma+\beta=\alpha} \|z^\gamma\|w_\gamma \leq
		\sum_{|\gamma|\leq n} \|z^\gamma\|w_\gamma \leq \|C^{(n)}_v\|.$$
	When we are working in more than one variable, one can see that, given a multi-index $\alpha$ of degree less than or equal to $n,$ the number of $\gamma$ such that 
	$\gamma+\beta=\alpha$ is very small compared to the set of all multi-indices $\gamma$ of degree less than $n.$ Thus, heuristically, one expects that $\|R_v\|$ will be much smaller than $\|C_v\|,$
	although detailed estimates will depend intricately on the parameters $w_\gamma.$ We interpret this as explanation of the fact that numerical experiments to test the column row property have not produced counterexamples.

	\section{The random basis lemma}
		We say a $\HH$-valued random variable $h$ is a \dfn{random vector} if $P(h \perp g)<1$ for all $g\in \HH.$

		The goal of this section is to prove the following lemma, which we will need for technical reasons later. The content is essentially
		that an infinite sequence of random vectors is a (perhaps overdetermined) basis.
		\begin{lemma}\label{mainlemma}
			If $h_1, h_2, \ldots$ is a sequence independent identically distributed of random vectors,
			then, almost surely, the closed span of the $h_i$ is equal to $\HH.$ 
		\end{lemma}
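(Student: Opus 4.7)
The plan is to reduce the lemma to two observations about the common distribution $\mu$ of the $h_i$: (i) the random-vector hypothesis forces $\overline{\operatorname{span}}\,\operatorname{supp}(\mu)=\HH$, and (ii) the i.i.d.\ samples $\{h_i\}$ are almost surely dense in $\operatorname{supp}(\mu)$. Once both are in hand one has $\overline{\operatorname{span}}\{h_i\}\supseteq\overline{\operatorname{span}}\,\operatorname{supp}(\mu)=\HH$ almost surely, which is the conclusion.

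For (i), I would argue by contrapositive. If $W:=\overline{\operatorname{span}}\,\operatorname{supp}(\mu)$ were a proper closed subspace of $\HH$, one could pick a nonzero $g\in W^\perp$; then $\mu$ would be concentrated on $W\subseteq g^\perp$, giving $P(h\perp g)=1$ and violating the random-vector hypothesis (which I read as $P(h\perp g)<1$ for all \emph{nonzero} $g$, since at $g=0$ it trivially fails). For (ii), since $\HH$ is separable (the monomials form a countable orthonormal basis), so is $\operatorname{supp}(\mu)$; pick a countable dense subset $D\subseteq\operatorname{supp}(\mu)$. For each $g\in D$ and each rational $\varepsilon>0$, the definition of support gives $\mu(B(g,\varepsilon))>0$, so the events $\{h_i\in B(g,\varepsilon)\}$ are i.i.d.\ with common positive probability; the second Borel-Cantelli lemma then shows they occur for infinitely many $i$ almost surely. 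A countable intersection over $(g,\varepsilon)\in D\times(\mathbb{Q}\cap(0,\infty))$ places $D$ inside $\overline{\{h_i\}}$ almost surely, after which closure of $\overline{\{h_i\}}$ together with density of $D$ in $\operatorname{supp}(\mu)$ upgrades the inclusion to $\operatorname{supp}(\mu)\subseteq\overline{\{h_i\}}$.

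I do not anticipate a substantive obstacle. The random-vector hypothesis is used in exactly one place (step (i)), separability of $\HH$ is immediate from the countable monomial basis, and step (ii) is a textbook Borel-Cantelli argument once one realizes that separability lets the uncountable density statement be reduced to a countable union of almost-sure events.
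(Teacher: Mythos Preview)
Your proof is correct and follows essentially the same approach as the paper: both identify a countable subset of $\operatorname{supp}(\mu)$ whose closed linear span is all of $\HH$, and then use the second Borel--Cantelli lemma to show the i.i.d.\ samples almost surely accumulate at every point of that set. The only cosmetic difference is that you invoke the support of $\mu$ directly and take a countable dense subset of it, whereas the paper builds the countable set by an inductive growing argument (add a point outside the current span, repeat, terminate by separability) without naming the support explicitly.
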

		\begin{proof}
		First we will need a lemma.
		\begin{lemma}\label{Aconstruction}
			Let $h$ be a random vector.
			There is a countable subset $A$ of $\HH$ such that the closed span of the elements of $A$ is equal to $\HH$
			and for every point $a\in A,$ $P(h\in U)>0$ for any neighborhood $U$ of $a.$
		\end{lemma}
		\begin{proof}
			For any subset $A$ such that for every point $a\in A,$ $P(h\in U)>0$ for any neighborhood $U$ of $a,$ and
			the closed span of the elements of $A$ is not equal to $\HH,$ 
			we will show that we can grow $A$ by a single element which not in closed span of the elements of $A$.
			We can only do this a countable number of times because the Hilbert space dimension of $\HH$ is countable.
			(Otherwise, via Gram-Schmidt, we could construct an uncountable orthonormal set by transfinite induction.)

			Choose $g$ such that $g \perp a$ for all $a\in A.$ Now, $P(h \perp g)<1.$ So there must be a point $b$ such that
			$P(h\in U) >0$ for every neighborhood of $b$ and $b$ is not perpendicular to $g,$ therefore, $b$ is not in the span of the elements of $A.$
		\end{proof}

		Suppose $h_1, h_2, \ldots$ is a sequence independent identically distributed of random vectors.
		Let $A$ be as in Lemma 1. Index $A$ a a sequence $a_n.$
		Let $B_{m,n}$ be a ball of radius $1/m$ centered at $a_n$
		Almost surely, the sequence $h_i$ must visit $B_{m,n}$ infinitely often,
		as $P(h_i\in B_{m,n})>0$. Therefore $A$ is a subset of the closure
		of the values of the sequence. (We have essentially the fact that a random function $f:\mathbb{N}\rightarrow \mathbb{N}^2$ is surjective with infinite multiplicity.)
		\end{proof}

\section{Sampling random Nevanlinna-Pick problems}
	Given a committee space $\HH,$ the \dfn{natural domain} of the multiplier algebra of $\HH$ is the set of all tuples of matrices $(x_1, x_2, \ldots) $ such that
	the map taking $M_{z_i} \mapsto x_i$ is a completely contractive homomorphism. In the commutative case, where $\HH$ might also
	be interpreted as a space of complex analytic functions, the most important and familiar points are the scalars, which are, in principle, the domain where it makes
	sense to evaluate all functions in $\MH.$

	Given $X=(x^{(1)}, \ldots, x^{(m)})$ a sequence of points in the natural domain of the space of multipliers, and compatiable target values $y^{(k)}_{ij},$
	the \dfn{Nevanlinna-Pick problem} asks 
	when there are $f_{ij}$ such that $f_{ij}(x^{(k)})=y^{(k)}_{ij}$ and the block multiplier $[M_{f_{ij}}]_{i,j}$ has norm less than or equal to $1.$
	Let $P_X$ be the projection onto $\{h | h(x^{(k)})=0 \textrm{ for all }k\}^{\perp}.$ Note that, as $\{h | h(x^{(k)})=0 \textrm{ for all }k\}$ is an invariant
	subspace for the shifts, its orthogonal complement is invariant for the adjoints, that is,
		$$M_f^*P_X = P_X M_f^* P_X, P_XM_f = P_X M_f P_X.$$
	A neccessary condition for the Nevanlinna-Pick problem to be solvable is for there to be
	functions $g_{ij}(x^{(k)})=y^{(k)}_{ij}$ such that $[P_X M_{g_{ij}} P_X]_{i,j}$ has norm less than $1.$ (In fact, the norm of this block (operator) matrix is independent
	of the choice of $g_{ij},$ when they exist.)
	Moreover, whenever we are working in a complete Nevanlinna-Pick space the condition is also sufficient. See \cite{ampi} and \cite{BMV}
	for a theory of commutative and noncommutative complete Nevanlinna-Pick spaces respectively.
	One way to test for the potential failure of the column-row property in a complete Nevanlinna-Pick space would be to choose a random Nevanlinna-Pick problem
	and then show that when we interpret the target data as a row, the problem is not solvable, but when the target data is interpreted as a column it is.
	Define,
		$$R^{X}_v = E(P_{X} M_{f_v} P_{X} M_{f_v}^*P_{X}), C^X_v = E(P_{X} M_{f_v}^* P_{X}M_{f_v}P_{X}).$$
	We interpret $\|R^{X}_v\|$ and $\|C^X_v\|$ as the minimum norm of a solutions to a Nevanlinna-Pick probem such that $y_i^{k}=f_i(x^{(k)})$ where the
	$f_i$ are random multipliers when $\HH$ is a complete Nevanlinna-Pick space.

	Let $x$ be a random variable taking values in the natural domain of the space of multipliers. We say $x$ is a \dfn{random point} whenever $P(h(x)=0)<1$ for every
	$h \in \HH.$
	\begin{lemma}\label{stronk}
		Let $\HH$ be a committee space.
		Let $x^{(1)}, x^{(2)}, \ldots$ be an infinite sequence of identically distributed independent random points, and let $X_n=(x^{(1)},\ldots, x^{(n)}).$
		The sequence $P_{X_n} \rightarrow I$ in the strong operator topology.
	\end{lemma}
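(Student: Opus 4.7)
The plan is to reduce the statement to the random basis lemma, Lemma \ref{mainlemma}, applied to the reproducing-kernel vectors associated to the random points $x^{(k)}$. First I would observe that $P_{X_n}$ is a monotone increasing sequence of orthogonal projections: the subspace $\{h \in \HH : h(x^{(k)}) = 0 \text{ for all } k \leq n\}$ shrinks in $n$, so its orthogonal complement grows. Thus $P_{X_n}$ converges SOT to the projection $P_\infty$ onto the closure of $\bigcup_n P_{X_n}\HH$, and it suffices to show $P_\infty = I$ almost surely; equivalently, that almost surely the only $h \in \HH$ with $h(x^{(k)}) = 0$ for every $k$ is $h = 0$.

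Next I would identify $P_{X_n}\HH$ as a span of reproducing-kernel-type vectors. For each $x$ in the natural domain, $h \mapsto h(x)$ is a bounded linear map from $\HH$ into a matrix algebra, whose entries are represented by reproducing-kernel vectors $K_x^{(i,j)} \in \HH$ satisfying $\langle h, K_x^{(i,j)}\rangle = h(x)_{i,j}$. Consequently $\{h : h(x) = 0\}^\perp$ is the finite-dimensional span of the $K_x^{(i,j)}$, and $P_{X_n}\HH$ is the closed linear span of $\{K_{x^{(k)}}^{(i,j)} : k \leq n,\, (i,j)\}$. Showing $P_\infty = I$ is therefore equivalent to showing that the total closed span of the $K_{x^{(k)}}^{(i,j)}$ is $\HH$ almost surely.

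Finally I would invoke the random basis lemma. In the scalar case (a single $K_x$ per point), the random-point hypothesis gives
\[ P(K_x \perp g) \;=\; P(g(x) = 0) \;<\; 1 \]
for every nonzero $g \in \HH$, so $K_x$ is a random vector; since the $x^{(k)}$ are i.i.d., so are the $K_{x^{(k)}}$, and Lemma \ref{mainlemma} directly gives that their closed span is $\HH$ almost surely, whence $P_{X_n} \to I$ SOT. The main obstacle is the matrix-valued case: one cannot immediately infer from $P(g(x) = 0) < 1$ that any fixed entry vector $K_x^{(i,j)}$ is itself a random vector in the paper's sense. I would handle this by rerunning the construction of Lemma \ref{Aconstruction}, which only uses that for every $g$ outside the current span there is positive probability that some entry of $g(x)$ is nonzero, exactly what the random-point hypothesis provides. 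The remainder of the proof of Lemma \ref{mainlemma} then carries over to the aggregated family $\{K_{x^{(k)}}^{(i,j)}\}$ and shows that it spans $\HH$ almost surely.
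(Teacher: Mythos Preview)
Your reduction to Lemma~\ref{mainlemma} via kernel vectors is exactly the paper's strategy, and in the scalar case your argument coincides with it. The divergence is in the matrix-valued case: rather than reopening the proof of Lemma~\ref{Aconstruction} to accommodate a family of entry vectors $K_x^{(i,j)}$, the paper manufactures a single i.i.d.\ sequence of random vectors by setting $h$ to be a random unit vector drawn from the finite-dimensional subspace $\{\eta : \eta(x)=0\}^\perp$. For any nonzero $g$ the random-point hypothesis gives $P(g(x)\neq 0)>0$, and on that event a random unit vector in that subspace is almost surely not orthogonal to $g$; hence $P(h\perp g)<1$ and Lemma~\ref{mainlemma} applies verbatim, with each $h^{(k)}$ automatically lying in the range of $P_{X_n}$ for $n\geq k$. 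Your proposed rerun of Lemma~\ref{Aconstruction} should also go through, but it carries extra bookkeeping (selecting an entry index $(i,j)$ with positive probability, coping with variable matrix sizes) that the paper's device sidesteps entirely.
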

	\begin{proof}
		Note that the sequence $P_{X_n}$ must converge monotonically to some projection
		in the strong operator topology, so it suffices to show that its range is all of $\HH.$		

		Define a random vector $h$ by choosing
		random point $x$ and then choosing a random unit vector in the finite dimensional space $\{\eta | \eta(x^{(k)})=0\}^\perp.$
		Note $h$ is a random vector because $P(\eta(x)=0)<1$ by definition of random point and therefore $P( h \perp g) < 1.$
	
		Note that since the closed span of $h^{(k)}$ is almost surely all of $\HH$ by Lemma \ref{mainlemma}, then we are done as each $h^{(k)}$ is
		in the range of $P_{X_n}$ for all $n\geq k.$
	\end{proof}

	As a corollary of our main result,
	we see that random Nevanlinna-Pick problems satisfy the column-row property.
	\begin{corollary}\label{randommatrix}
		Let $\HH$ be a committee space.
		Let $x^{(1)}, x^{(2)}, \ldots$ be an infinite sequence of identically distributed independent random points, and let $X_n=(x^{(1)},\ldots, x^{(n)}).$
		Let $v$ be a random multiplier model. 
		Then, almost surely,
			$$ \|R_v\|=\lim_{n\rightarrow \infty}  \|R^{X_n}_v\| \leq \lim_{n\rightarrow \infty}  \|C^{X_n}_v\|=\|C_v\|.$$
	\end{corollary}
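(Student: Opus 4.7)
The plan is to reduce the statement to Theorem \ref{mainresult} together with Lemma \ref{stronk}. Once the two outer equalities $\|R_v\| = \lim_n \|R^{X_n}_v\|$ and $\|C_v\| = \lim_n \|C^{X_n}_v\|$ are in hand, the middle inequality is automatic, because Theorem \ref{mainresult} already yields $\|R_v\|\leq \|C_v\|$. So the real work is to prove the two limit equalities, which say that compression by the increasing family $P_{X_n}\nearrow I$ (in the strong operator topology, by Lemma \ref{stronk}) preserves the norms of $R_v$ and $C_v$ in the limit.

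For $R_v$, I would exploit that $\{h\,|\,h(x^{(k)})=0 \text{ for all } k\leq n\}$ is invariant under every multiplier, which gives the identities $P_{X_n} M_f = P_{X_n} M_f P_{X_n}$ and $M_f^* P_{X_n} = P_{X_n} M_f^* P_{X_n}$ already noted in the paper. Substituting these into the definition of $R^{X_n}_v$ collapses it to $R^{X_n}_v = P_{X_n} R_v P_{X_n}$. Because $R_v$ is positive,
\[
\|R^{X_n}_v\| = \sup\{\langle R_v v, v\rangle : v \in \ran{P_{X_n}},\ \|v\|=1\},
\]
which is monotone in $n$, bounded above by $\|R_v\|$, and, since $\bigcup_n \ran{P_{X_n}}$ is dense in $\HH$ by Lemma \ref{stronk}, increases up to $\|R_v\|$.

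The parallel argument for $C_v$ is a little subtler, since $C^{X_n}_v$ is not of the form $P_{X_n} C_v P_{X_n}$. Using the same commutation identities one computes, for $v \in \ran{P_{X_n}}$,
\[
\langle C^{X_n}_v v, v\rangle = E\|P_{X_n} M_{f_v} v\|^2, \qquad \langle C_v v, v\rangle = E\|M_{f_v} v\|^2.
\]
The upper bound $\|C^{X_n}_v\| \leq \|C_v\|$ is then immediate from $P_{X_n}\leq I$. For the reverse inequality I would pick a near-optimal unit vector $v_0$ for $\|C_v\|$, evaluate the quadratic form of $C^{X_n}_v$ at $P_{X_n} v_0$, and pass to the limit inside the expectation: pointwise in the sample space $P_{X_n} M_{f_v} P_{X_n} v_0 \to M_{f_v} v_0$ because $P_{X_n}\to I$ strongly and $M_{f_v}$ is bounded, and the almost sure uniform bound $\|M_{f_v}\|\leq C$ built into the random multiplier model allows dominated convergence to pass this through $E$.

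The main technical obstacle is precisely this last step: strong operator convergence $P_{X_n}\to I$ does not by itself imply norm convergence of the compressions $P_{X_n} A P_{X_n}$, so I must lean on the positivity of $R_v$ and $C_v$ (which turns their norms into suprema of quadratic forms) together with the almost sure uniform bound on $\|M_{f_v}\|$ to justify exchanging limit and expectation. Once both limit identities are in place, Theorem \ref{mainresult} supplies the sandwiched inequality and closes the chain.
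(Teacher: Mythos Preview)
Your proposal is correct. The treatment of $R_v$ is essentially identical to the paper's: both collapse $R^{X_n}_v$ to $P_{X_n} R_v P_{X_n}$ via the commutation identities and then use positivity together with the density coming from $P_{X_n}\to I$ strongly to recover the norm in the limit.

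For $C_v$ you take a somewhat different, more robust route. The paper exploits the explicit computation from Lemma~\ref{collemma} that $\|C_v\| = \langle C_v 1, 1\rangle$, so it suffices to track the single quadratic form
\[
\langle C^{X_n}_v 1, 1\rangle \;=\; \sum_\gamma w_\gamma \|P_{X_n} z^\gamma\|^2,
\]
which rises to $\sum_\gamma w_\gamma \|z^\gamma\|^2 = \|C_v\|$ by monotone convergence for series. You instead work with an arbitrary near-maximizing unit vector $v_0$ and use dominated convergence (justified by the almost sure bound $\|M_{f_v}\|\leq C$ from the random multiplier model) to pass the limit through the expectation $E\|P_{X_n} M_{f_v} P_{X_n} v_0\|^2 \to E\|M_{f_v} v_0\|^2$. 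Your argument is cleaner in that it does not depend on knowing where $\|C_v\|$ is attained and would survive without the committee structure; the paper's argument is more elementary in that it avoids any limit--expectation interchange, but it leans on that structural fact about the constant function $1$. Both are valid.
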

	\begin{proof}
		By Theorem \ref{mainresult}, it is enough to show the limits converge to the appropriate values.
		Recall that $P_{X_n} \rightarrow I$ in the strong operator topology almost surely by Lemma \ref{stronk}.
		Moreover, note the sequence $P_{X_n}$ is increasing.
		
		Note,
		\begin{align*}
			R^{X_n}_v 	&= E(P_{X_n} M_{f_v} P_{X_n} M_{f_v}^*P_{X_n})\\
					&= E(P_{X_n} M_{f_v} M_{f_v}^*P_{X_n})\\
					&= P_{X_n} E( M_{f_v} M_{f_v}^*) P_{X_n}.
		\end{align*}
		Therefore, $R^{X_n}_v$ converges in the strong operator topology to $R_v,$ and each $\|R^{X_n}_v\|\leq \|R_v\|.$
		
		Now consider,
		\begin{align*}
			C^{X_n}_v 	&= E(P_{X_n} M_{f_v}^* P_{X_n} M_{f_v}P_{X_n})\\
					&= E(M_{f_v}^* P_{X_n} M_{f_v}).
		\end{align*}
		Therefore,
			$$C_v - C^{X_n}_v  = E(M_{f_v}^* (I- P_{X_n}) M_{f_v})$$
		is positive semi-definite.
		So, we see that $\|C^{X_n}_v\|  \leq \|C_v\|.$
		Recall, from the proof of Lemma \ref{collemma},
			$$\|C_v\| = \langle C_v 1, 1 \rangle = \sum \|z^\gamma\|^2 w_\gamma.$$
		So, it is sufficient to show that
			$$\lim_{n\rightarrow \infty} \langle C^{X_n}_v 1, 1\rangle = \sum_\gamma \|z^\gamma\|^2 w_\gamma.$$
		Calculating,
			\begin{align*}
			C^{X_n}_v 	&= E(M_{f_v}^* P_{X_n} M_{f_v}).\\
					&= E(\sum_{\gamma,\gamma'} \overline{v_\gamma} v_{\gamma'}M_{z^{\gamma}}^*P_{X_n}M_{z^{\gamma'}}) \\
					&= \sum_{\gamma} w_\gamma M_{z^{\gamma}}^*P_{X_n}M_{z^{\gamma}}. 
			\end{align*}
		Now consider,
			\begin{align*}
		\langle C^{X_n}_v 1, 1\rangle &= \langle\sum_{\gamma} w_\gamma  M_{z^{\gamma}}^*P_{X_n}M_{z^{\gamma}}1,1\rangle \\
					 &= \sum_{\gamma} w_\gamma  \langle P_{X_n}z^\gamma,P_{X_n} z^\gamma \rangle\\
					 &= \sum_{\gamma}   \|P_X z^\gamma\|^2 w_\gamma.
			\end{align*}
		As $n \rightarrow \infty$ this converges monotonically to $\sum_\gamma \|z^\gamma\|^2 w_\gamma,$ since $P_{X_n}$ is increasing
		and $P_{X_n} \rightarrow I$ in the strong operator topology.
	\end{proof}
	\subsection{Some conclusions on potential experiments}
		Suppose one were looking for a counterexample to the claim some space, for example the Drury-Arveson space, satisfied the true column row property.
		The obvious thing to try is to take a random Nevanlinna-Pick problem. However, our result Theorem \ref{randommatrix} says that if we choose
		many interpolation nodes and a long column, we are doomed. Therefore, it is advisable to choose the least number of interpolation nodes possible and the shortest
		conceivable column length. (For example, $2.$) Furthermore, if we are forced to choose a lot of nodes, it would be best not to choose them uniformly.
		
		 Experiments performed on the two variable Fock space, performed by Augat, Jury, and the present author, which are described in \cite{AJKP},
		gave fairly poor results on random data. For examples arising from a single $2$ by $2$ matrix interpolation node with a column of length $2$, choosing elements at random yielded
		a column-row constant of only about $1.0043.$ At the time, it was thought that ``bigger is better," however our Corollary \ref{randommatrix} says that is not the case.
		Later, hand-crafted examples gave showed that the column-row property fails for any constant.

\bibliography{references}

\begin{thebibliography}{1}

\bibitem{ampi}
J.~Agler and J.E. M\raise.45ex\hbox{c}Carthy.
\newblock {\em Pick Interpolation and Hilbert Function Spaces}.
\newblock American Mathematical Society, Providence, 2002.

\bibitem{AHMR}
Alexandru Aleman, Michael Hartz, John~E McCarthy, and Stefan Richter.
\newblock {Interpolating Sequences in Spaces with the Complete Pick Property}.
\newblock {\em Int. Math. Res. Not.}, 10 2017.

\bibitem{AHMR2}
Alexandru Aleman, Michael Hartz, John~E McCarthy, and Stefan Richter.
\newblock {Weak products of complete Pick spaces}.
\newblock Indiana Univ. Math. Jour., to appear., 2018.

\bibitem{AJKP}
Meric Augat, Michael Jury, and James~Eldred Pascoe.
\newblock {Effective non-commutative Nevanlinna-Pick interpolation on the row
  ball and failure of the column-row property for the Fock space}.
\newblock in preparation., 2019.

\bibitem{BMV}
Joseph~A. Ball, Gregory Marx, and Victor Vinnikov.
\newblock Noncommutative reproducing kernel {H}ilbert spaces.
\newblock {\em Journal of Functional Analysis}, 271(7):1844 -- 1920, 2016.

\bibitem{CSU}
W.~George Cochran, Joel~H. Shapiro, and David~C. Ullrich.
\newblock {Random Dirichlet Functions: Multipliers and Smoothness}.
\newblock {\em Canadian Journal of Mathematics}, 45(2):255–268, 1993.

\bibitem{JuryMartin}
Michael~T. Jury and Robert~T.W. Martin.
\newblock {Factorization in weak products of complete Pick spaces}.
\newblock {\em Bulletin of the London Mathematical Society}, 51(2):223--229,
  2019.

\bibitem{Trent2004}
Tavan~T. Trent.
\newblock A corona theorem for multipliers on {D}irichlet space.
\newblock {\em Integral Equations and Operator Theory}, 49(1):123--139, May
  2004.

\end{thebibliography}
\bibliographystyle{plain}

\end{document}